\def\vs{\vspace{0.1cm}\\}
\def\far{\uparrow}
\def\fab{\downarrow}
\def\fd{\rightarrow}
\def\fizq{\leftarrow}
\def\iin{\mathcal{E}}
\def\fid{\leftrightarrow}
\newtheorem{thm}{Theorem}[section]
\newtheorem{lem}[thm]{Lemma}
\newtheorem{prop}[thm]{Proposition}
\newtheorem{defn}[thm]{Definition}
\newtheorem{cor}[thm]{Corollary}
\theoremstyle{remark}
\newtheorem*{note}{Note}
\normalfont\fontsize{14}{15}\bfseries}{\thesection}{1em}{}
\title{$\rho$-assoc and $\rho$-dist of \textit{wfs} and \textit{f} in $\Sigma$ and $\mathscr{L_{HA}}$-theory on 0-OL}
\author{Lukas Restrepo\vs
  \scriptsize Universidad de Antioquia, Carmen de Viboral, Antioquia, Colombia\\
  \scriptsize Universidad Católica de Oriente, Rionegro, Antioquia, Colombia\\
  \scriptsize lmauricio.restrepo@udea.edu.co \ \ lukas.restrepo9160@uco.edu.co
  \date{August 12, 2014}
}
\begin{document}
\maketitle

\abstract{In this paper we create peudo associativity ($\rho$-assoc) and peudo distributivity ($\rho$-dist) properties for not fundamental operators NFO $\fab$, $\far$, using two semantic rules, also we build the proofs for this result in Hilbert-Ackermann ($\mathscr{HA}$) axiomatic system, all this in the 0-order logic (0-OL) context.\vspace{0.2cm}}

\noindent
Keywords: 0-order logic, peudo associativity, peudo distributivity.\\
AMS classification: 03B05.

\section{Introduction}

In 0-OL exists classic results about basic properties of associativity and distributivity with $\lor, \land, \fd, \fid$ and $\neg$ operators
\cite{B2} these are a consequence of the semantic (truth tables \cite{B1}) and syntactic ($\mathscr{HA}$ axiomatic system), in this paper we show a new notion about the associative and distributive properties for not fundamental operators (NFO).

\begin{defn}[NFO, FO] Are binary operators \\
NFO are the operators $\fab$, $\far$, $\fizq$, $\oplus$ the negations forms of the FO \\
FO are the classic operators $\lor$, $\land$, $\fd$, $\fid$ 
\end{defn}

\begin{defn}[$\Sigma_{NFBO}, \ \Sigma_{FBO}$] Are languages \textsc{\cite{B2}}\\
$\Sigma_{NFO}$ is the languaje with NFO, monary operator $\neg$ and parentheses.\\
$\Sigma_{FO}$ is the languaje with FO, monary operator $\neg$ and parentheses.
\end{defn}
The NFO and FO have dual representations in a $\Sigma$ language.

\section{Semantic comparison}

\begin{defn} [$\mathscr{A'}$-\textit{wfs} of $\Sigma_{NFO}$] 
A $\mathscr{A'}$-\textit{wfs} is a recursive string of the 0-OL semantic balanced and structurally well formed with interpretation
\textsc{\cite{B1}} that has the following elements.
\begin{enumerate}[topsep=2pt, partopsep=2pt, itemsep=2pt, parsep=2pt]
\item 	Atoms: $p,q, \dots$ that represent statements
\item 	Symbols of $\Sigma_{NFO}$
\end{enumerate}
\end{defn}

\begin{defn} [$\mathscr{A}$-\textit{wfs} of $\Sigma_{FO}$] 
A $\mathscr{A}$-\textit{wfs} is a recursive string of the 0-OL semantic balanced and structurally well formed with interpretation
that has the following elements.
\begin{enumerate}[topsep=2pt, partopsep=2pt, itemsep=2pt, parsep=2pt]
\item Atoms: $p,q, \dots$ that represent statements
\item Symbols of $\Sigma_{FO}$
\end{enumerate}
\end{defn}

\begin{note} The $\neg$ operator changes the interpretation of 1 to 0 and viceversa.
\end{note}

\begin{defn}[Truth Table \cite{B1}] Graphical format for strings $\mathscr{A}$ or $\mathscr{A'}$, containing all possible values of interpretations of the atoms $\mathscr{I}(p,q, \dots)$ and the interpretations of operators. The following are the truth tables for NFO of $\mathscr{A'}$-\textit{wfs} and FO of $\mathscr{A}$-\textit{wfs}. The final analysis is represented by the darker color column.
\begin{table}[ht!]	\begin{center}
	\begin{tabular}{| c |>{\columncolor[rgb]{0.6,0.6,0.6}} c | c |}	\hline \rowcolor[rgb]{0.6,0.6,0.6}
		$p$&$\fab$&$q$ \\ \hline
		1 & 0 & 1 \\	\hline
		1 & 0 & 0 \\	\hline
		0 & 0 & 1 \\	\hline
		0 & 1 & 0 \\	\hline	
	\end{tabular} 
	\begin{tabular}{| c |>{\columncolor[rgb]{0.6,0.6,0.6}} c | c |}	\hline \rowcolor[rgb]{0.6,0.6,0.6}
		$p$&$\far$&$q$ \\ \hline
		1 & 0 & 1 \\	\hline
		1 & 1 & 0 \\	\hline
		0 & 1 & 1 \\	\hline
		0 & 1 & 0 \\	\hline	
	\end{tabular} 
	\begin{tabular}{| c |>{\columncolor[rgb]{0.6,0.6,0.6}} c | c |}	\hline \rowcolor[rgb]{0.6,0.6,0.6}
		$p$&$\fizq$&$q$ \\ \hline
		1 & 0 & 1 \\	\hline
		1 & 1 & 0 \\	\hline
		0 & 0 & 1 \\	\hline
		0 & 0 & 0 \\	\hline	
	\end{tabular} 
	\begin{tabular}{| c |>{\columncolor[rgb]{0.6,0.6,0.6}} c | c |}	\hline \rowcolor[rgb]{0.6,0.6,0.6}
		$p$&$\oplus$&$q$ \\ \hline
		1 & 0 & 1 \\	\hline
		1 & 1 & 0 \\	\hline
		0 & 1 & 1 \\	\hline
		0 & 0 & 0 \\	\hline	
	\end{tabular} \vs
	\begin{tabular}{| c |>{\columncolor[rgb]{0.6,0.6,0.6}} c | c |}	\hline \rowcolor[rgb]{0.6,0.6,0.6}
		$p$&$\lor$&$q$ \\ \hline
		1 & 1 & 1 \\	\hline
		1 & 1 & 0 \\	\hline
		0 & 1 & 1 \\	\hline
		0 & 0 & 0 \\	\hline	
	\end{tabular} 
	\begin{tabular}{| c |>{\columncolor[rgb]{0.6,0.6,0.6}} c | c |}	\hline \rowcolor[rgb]{0.6,0.6,0.6}
		$p$&$\land$&$q$ \\ \hline
		1 & 1 & 1 \\	\hline
		1 & 0 & 0 \\	\hline
		0 & 0 & 1 \\	\hline
		0 & 0 & 0 \\	\hline	
	\end{tabular} 
	\begin{tabular}{| c |>{\columncolor[rgb]{0.6,0.6,0.6}} c | c |}	\hline \rowcolor[rgb]{0.6,0.6,0.6}
		$p$&$\fd$&$q$ \\ \hline
		1 & 1 & 1 \\	\hline
		1 & 0 & 0 \\	\hline
		0 & 1 & 1 \\	\hline
		0 & 1 & 0 \\	\hline	
	\end{tabular} 
	\begin{tabular}{| c |>{\columncolor[rgb]{0.6,0.6,0.6}} c | c |}	\hline \rowcolor[rgb]{0.6,0.6,0.6}
		$p$&$\fid$&$q$ \\ \hline
		1 & 1 & 1 \\	\hline
		1 & 0 & 0 \\	\hline
		0 & 0 & 1 \\	\hline
		0 & 1 & 0 \\	\hline	
	\end{tabular} \end{center} 
	\end{table} 
\end{defn}

To simplify writing let $\iin$ a primitive symbol that describes ``are \textit{wfs} of''

\begin{defn}[Semantic Parallel] 
$\mathscr{A'} \ \iin \ \Sigma_{NFO}$ is the parallel of $\mathscr{A} \ \iin \ \Sigma_{FO}$ iff
$\mathscr{I} (\mathscr{A'})$ is equal to $\mathscr{I} (\mathscr{A})$ for all values of the atoms in the
final analysis of $\mathscr{A}$ and $\mathscr{A'}$, the parallel is denoted by $\mathscr{A} \parallel \mathscr{A'}$.
\end{defn}

\begin{defn}[Semantic Perpendicularity] 
$\mathscr{A'} \ \iin \ \Sigma_{NFO}$ is the perpendicular of $\mathscr{A} \ \iin \ \Sigma_{FO}$ iff
$\mathscr{I} (\mathscr{A'})$ is equal to $\mathscr{I} (\neg \mathscr{A})$ for all values of the atoms in the
final analysis of $\mathscr{A}$ and $\mathscr{A'}$, the perpendicularity is denoted by $\mathscr{A} \perp \mathscr{A'}$.
\end{defn}

\begin{defn}[Tautology]
$\mathscr{A}$-\textit{wfs} or $\mathscr{A'}$-\textit{wfs} are tautology if the interpretation $\mathscr{I}(\mathscr{A})=1$ or
$\mathscr{I}(\mathscr{A'})=1$ respectively for all values of the final analysis.
\end{defn}

\begin{defn}[Contradiction]
$\mathscr{A}$-\textit{wfs} or $\mathscr{A'}$-\textit{wfs} is a contradiction if the interpretation $\mathscr{I}(\mathscr{A})=0$ or 
$\mathscr{I}(\mathscr{A'})=0$ respectively for all values of the final analysis.
\end{defn}

\begin{prop}Associativity and distributive properties are tautologies  \textsc{\cite{B4}} \textsc{\cite{B7}} in the 0-OL semantic with FO i.e.
\begin{enumerate}[label=$\mathcal{A}_{\arabic*}$, topsep=2pt, partopsep=2pt, itemsep=2pt, parsep=2pt]
\item $\mathscr{I}((p \lor(q \lor r)) \fid ((p \lor q) \lor r))=1$
\item $\mathscr{I}((p \land (q \land r)) \fid ((p \land q) \land r))=1$
\item $\mathscr{I}((p \land (q \lor r)) \fid ((p \land q) \lor(p \land r)))=1$
\item $\mathscr{I}((p \lor(q \land r)) \fid ((p \lor q) \land (p \lor r)))=1$
\end{enumerate}
\end{prop}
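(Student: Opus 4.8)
The plan is to establish each of $\mathcal{A}_1,\dots,\mathcal{A}_4$ directly by the truth table method of the preceding definition. Each of the four formulas involves only the three atoms $p,q,r$, so its final analysis has $2^3 = 8$ rows. For a fixed formula I would introduce auxiliary columns for the immediate subformulas — for instance, for $\mathcal{A}_3$ the columns $q\lor r$, $\;p\land(q\lor r)$, $\;p\land q$, $\;p\land r$, and $(p\land q)\lor(p\land r)$ — filling each entry by reading off the FO truth tables for $\lor$ and $\land$, and then forming the column for the main connective $\fid$ from the truth table for $\fid$.

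The observation that makes the conclusion immediate once these columns are written is the shape of the $\fid$ table: $\mathscr{I}(\mathscr{A}\fid\mathscr{B})=1$ exactly when $\mathscr{I}(\mathscr{A})=\mathscr{I}(\mathscr{B})$. Hence it suffices to check, row by row, that the left subformula and the right subformula of each biconditional always receive the same interpretation. For $\mathcal{A}_1$ and $\mathcal{A}_2$ this reduces to comparing $\mathscr{I}(p\lor(q\lor r))$ with $\mathscr{I}((p\lor q)\lor r)$ (respectively with $\land$ in place of $\lor$) across the eight valuations; for $\mathcal{A}_3$ and $\mathcal{A}_4$ one compares the two sides of the distributive laws in the same way. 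In every one of the eight rows the two sides agree, so the $\fid$-column is constantly $1$; that is, the final analysis consists entirely of $1$'s, which is precisely the definition of a tautology.

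Equivalently, and more briefly, one may read the $\lor$ and $\land$ tables as saying $\mathscr{I}(\mathscr{A}\lor\mathscr{B})=\max\{\mathscr{I}(\mathscr{A}),\mathscr{I}(\mathscr{B})\}$ and $\mathscr{I}(\mathscr{A}\land\mathscr{B})=\min\{\mathscr{I}(\mathscr{A}),\mathscr{I}(\mathscr{B})\}$ on $\{0,1\}$; then $\mathcal{A}_1,\mathcal{A}_2$ are the associativity of $\max$ and $\min$, and $\mathcal{A}_3,\mathcal{A}_4$ the mutual distributivity of $\min$ and $\max$ on a totally ordered two-element set, all verified at once by a case analysis on which of the three values is largest (smallest). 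I would nonetheless present the plain truth-table computation, since it is the most elementary route and stays entirely within the semantic apparatus already set up.

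There is no genuine obstacle here: the argument is purely mechanical. The only points requiring care are respecting the displayed parenthesization when filling the auxiliary columns — in particular, not silently invoking the very associativity one is proving — and keeping the eight rows in a fixed, consistent order across the four tables so that the final row-by-row comparison of the two sides is transparent.
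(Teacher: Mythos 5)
Your proposal is correct and coincides with the paper's own proof, which simply exhibits the four eight-row truth tables and observes that the $\fid$-column is constantly $1$. The additional remarks (reducing the check to agreement of the two sides of each biconditional, and the $\max$/$\min$ reading) are sound but do not change the method.
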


\begin{proof} With truth tables can be verified
\begin{enumerate}[label=$\mathcal{A}_{\arabic*}$, topsep=2pt, partopsep=2pt, itemsep=2pt, parsep=2pt]
\item \begin{tabular}{| c |>{\columncolor[rgb]{0.8,0.8,0.8}} c | c |>{\columncolor[rgb]{0.9,0.9,0.9}} c | c |>{\columncolor[rgb]						{0.6,0.6,0.6}} c | c |>{\columncolor[rgb]{0.9,0.9,0.9}} c | c |>{\columncolor[rgb]{0.8,0.8,0.8}} c | c | }
			\hline \rowcolor[rgb]{0.6,0.6,0.6}
			$(p$&$\lor$&$(q$&$\lor$&$r))$&$\fid$&$((p$&$\lor$&$q)$&$\lor$&$r)$ \\ \hline
			1 & 1 & 1 & 1 & 1 & 1 & 1 & 1 & 1 & 1 & 1 \\	\hline
			1 & 1 & 1 & 1 & 0 & 1 & 1 & 1 & 1 & 1 & 0 \\	\hline
			1 & 1 & 0 & 1 & 1 & 1 & 1 & 1 & 0 & 1 & 1 \\	\hline
			1 & 1 & 0 & 0 & 0 & 1 & 1 & 1 & 0 & 1 & 0 \\	\hline
			0 & 1 & 1 & 1 & 1 & 1 & 0 & 1 & 1 & 1 & 1 \\	\hline
			0 & 1 & 1 & 1 & 0 & 1 & 0 & 1 & 1 & 1 & 0 \\	\hline
			0 & 1 & 0 & 1 & 1 & 1 & 0 & 0 & 0 & 1 & 1 \\	\hline
			0 & 0 & 0 & 0 & 0 & 1 & 0 & 0 & 0 & 0 & 0 \\	\hline
		\end{tabular}
\item \begin{tabular}{| c |>{\columncolor[rgb]{0.8,0.8,0.8}} c | c |>{\columncolor[rgb]{0.9,0.9,0.9}} c | c |>{\columncolor[rgb]						{0.6,0.6,0.6}} c | c |>{\columncolor[rgb]{0.9,0.9,0.9}} c | c |>{\columncolor[rgb]{0.8,0.8,0.8}} c | c | }
			\hline \rowcolor[rgb]{0.6,0.6,0.6}
			$(p$&$\land$&$(q$&$\land$&$r))$&$\fid$&$((p$&$\land$&$q)$&$\land$&$r)$ \\ \hline
			1 & 1 & 1 & 1 & 1 & 1 & 1 & 1 & 1 & 1 & 1 \\	\hline
			1 & 0 & 1 & 0 & 0 & 1 & 1 & 1 & 1 & 0 & 0 \\	\hline
			1 & 0 & 0 & 0 & 1 & 1 & 1 & 0 & 0 & 0 & 1 \\	\hline
			1 & 0 & 0 & 0 & 0 & 1 & 1 & 0 & 0 & 0 & 0 \\	\hline
			0 & 0 & 1 & 1 & 1 & 1 & 0 & 0 & 1 & 0 & 1 \\	\hline
			0 & 0 & 1 & 0 & 0 & 1 & 0 & 0 & 1 & 0 & 0 \\	\hline
			0 & 0 & 0 & 0 & 1 & 1 & 0 & 0 & 0 & 0 & 1 \\	\hline
			0 & 0 & 0 & 0 & 0 & 1 & 0 & 0 & 0 & 0 & 0 \\	\hline
		\end{tabular}
\item \begin{tabular}{| c |>{\columncolor[rgb]{0.8,0.8,0.8}} c | c |>{\columncolor[rgb]{0.9,0.9,0.9}} c | c |>{\columncolor[rgb]						{0.6,0.6,0.6}} c | c |>{\columncolor[rgb]{0.9,0.9,0.9}} c | c |>{\columncolor[rgb]{0.8,0.8,0.8}} c | c | c | c | c | }
			\hline \rowcolor[rgb]{0.6,0.6,0.6}
			$(p$&$\lor$&$(q$&$\land$&$r))$&$\fid$&$((p$&$\lor$&$q)$&$\land$&$(p$&$\lor$&$r))$ \\ \hline
			1 & 1 & 1 & 1 & 1 & 1 & 1 & 1 & 1 & 1 & 1 & 1 & 1\\	\hline
			1 & 1 & 1 & 0 & 0 & 1 & 1 & 1 & 1 & 1 & 1 & 1 & 0\\	\hline
			1 & 1 & 0 & 0 & 1 & 1 & 1 & 1 & 0 & 1 & 1 & 1 & 1\\	\hline
			1 & 1 & 0 & 0 & 0 & 1 & 1 & 1 & 0 & 1 & 1 & 1 & 0\\	\hline
			0 & 1 & 1 & 1 & 1 & 1 & 0 & 1 & 1 & 1 & 0 & 1 & 1\\	\hline
			0 & 0 & 1 & 0 & 0 & 1 & 0 & 1 & 1 & 0 & 0 & 0 & 0\\	\hline
			0 & 0 & 0 & 0 & 1 & 1 & 0 & 0 & 0 & 0 & 0 & 1 & 1\\	\hline
			0 & 0 & 0 & 0 & 0 & 1 & 0 & 0 & 0 & 0 & 0 & 0 & 0\\	\hline
		\end{tabular}
\item \begin{tabular}{| c |>{\columncolor[rgb]{0.8,0.8,0.8}} c | c |>{\columncolor[rgb]{0.9,0.9,0.9}} c | c |>{\columncolor[rgb]						{0.6,0.6,0.6}} c | c |>{\columncolor[rgb]{0.9,0.9,0.9}} c | c |>{\columncolor[rgb]{0.8,0.8,0.8}} c | c | c | c | c | }
			\hline \rowcolor[rgb]{0.6,0.6,0.6}
			$(p$&$\land$&$(q$&$\lor$&$r))$&$\fid$&$((p$&$\land$&$q)$&$\lor$&$(p$&$\land$&$r))$ \\ \hline
			1 & 1 & 1 & 1 & 1 & 1 & 1 & 1 & 1 & 1 & 1 & 1 & 1\\	\hline
			1 & 1 & 1 & 1 & 0 & 1 & 1 & 1 & 1 & 1 & 1 & 0 & 0\\	\hline
			1 & 1 & 0 & 1 & 1 & 1 & 1 & 0 & 0 & 1 & 1 & 1 & 1\\	\hline
			1 & 0 & 0 & 0 & 0 & 1 & 1 & 0 & 0 & 0 & 1 & 0 & 0\\	\hline
			0 & 0 & 1 & 1 & 1 & 1 & 0 & 0 & 1 & 0 & 0 & 0 & 1\\	\hline
			0 & 0 & 1 & 1 & 0 & 1 & 0 & 0 & 1 & 0 & 0 & 0 & 0\\	\hline
			0 & 0 & 0 & 1 & 1 & 1 & 0 & 0 & 0 & 0 & 0 & 0 & 1\\	\hline
			0 & 0 & 0 & 0 & 0 & 1 & 0 & 0 & 0 & 0 & 0 & 0 & 0\\	\hline
		\end{tabular}
	\end{enumerate}
\end{proof}

\begin{prop} We can create a $\rho$-assoc and $\rho$-dist properties in the 0-OL semantic with NFO that are contradictions
\begin{enumerate}[label=$\mathcal{A'}_{\arabic*}$, topsep=2pt, partopsep=2pt, itemsep=2pt, parsep=2pt]
\item $\mathscr{I}((p \fab \neg (q \fab r)) \oplus (\neg (p \fab q) \fab r))=0$
\item $\mathscr{I}((p \far \neg (q \far r)) \oplus (\neg (p \far q) \far r))=0$
\item $\mathscr{I}((p \fab \neg (q \far r)) \oplus (\neg (p \fab q) \far \neg (p \fab r)))=0$
\item $\mathscr{I}((p \far \neg (q \fab r)) \oplus (\neg (p \far q) \fab \neg (p \far r)))=0$
\end{enumerate}
\end{prop}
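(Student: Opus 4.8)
The plan is to derive each of $\mathcal{A'}_1$–$\mathcal{A'}_4$ from the matching tautology in the previous Proposition, using the observation that every NFO is the perpendicular of its companion FO. Concretely, from the truth tables one reads off $\mathscr{I}(a \fab b) = \mathscr{I}(\neg(a \lor b))$, $\mathscr{I}(a \far b) = \mathscr{I}(\neg(a \land b))$ and $\mathscr{I}(a \oplus b) = \mathscr{I}(\neg(a \fid b))$ for every interpretation of the atoms, and since $\mathscr{I}$ is defined recursively on $\mathscr{A'}$-\textit{wfs} these equalities persist when $a,b$ are arbitrary subformulas, not just atoms. Combined with the Note that $\neg$ is an involution, this gives the two de-negation identities $\mathscr{I}(\neg(a \fab b)) = \mathscr{I}(a \lor b)$ and $\mathscr{I}(\neg(a \far b)) = \mathscr{I}(a \land b)$.

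Next I would rewrite each operand of the $\oplus$ from the inside out using these identities, so that every $\fab$ or $\far$ is traded for a $\lor$ or $\land$ under a single outermost $\neg$. For $\mathcal{A'}_1$ this turns the left operand into a formula with the interpretation of $\neg(p \lor (q \lor r))$ and the right operand into $\neg((p \lor q) \lor r)$; for $\mathcal{A'}_2$ the operands become $\neg(p \land (q \land r))$ and $\neg((p \land q) \land r)$; for $\mathcal{A'}_3$ they become $\neg(p \lor (q \land r))$ and $\neg((p \lor q) \land (p \lor r))$; and for $\mathcal{A'}_4$ they become $\neg(p \land (q \lor r))$ and $\neg((p \land q) \lor (p \land r))$. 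In every case the two operands have the form $\neg X$ and $\neg Y$ where $X \fid Y$ is, respectively, $\mathcal{A}_1$, $\mathcal{A}_2$, $\mathcal{A}_4$, $\mathcal{A}_3$ of the previous Proposition, hence $\mathscr{I}(X) = \mathscr{I}(Y)$ and therefore $\mathscr{I}(\neg X) = \mathscr{I}(\neg Y)$ for all values of $p,q,r$.

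Finally, the truth table of $\oplus$ shows it returns $0$ precisely when its two arguments have equal interpretation (equivalently, $\oplus$ is the perpendicular of $\fid$, and $\mathscr{I}(\alpha \fid \beta) = 1$ whenever $\mathscr{I}(\alpha) = \mathscr{I}(\beta)$). Applying this with $\alpha = \neg X$ and $\beta = \neg Y$ yields $\mathscr{I}(\neg X \oplus \neg Y) = 0$ for every valuation, which is exactly the assertion that each $\mathcal{A'}_i$ is a contradiction. I expect the only real friction to be the bookkeeping of the nested negations in the inside-out rewriting: one must track exactly how many $\neg$'s sit in front of each subformula so that the rewritten operands land on the two sides of the correct FO-tautology; the genuine logical work is already carried by the previous Proposition together with the involutivity of $\neg$. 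If one prefers a self-contained argument, each $\mathcal{A'}_i$ can instead be checked directly by an eight-row truth table over $p,q,r$, exactly as in the proof of the previous Proposition, but the reduction above makes those four further tables unnecessary.
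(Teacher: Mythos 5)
Your proof is correct, but it takes a genuinely different route from the paper. The paper proves this proposition exactly the way it proved the previous one: by writing out four eight-row truth tables over $p,q,r$ and observing that the final-analysis column under $\oplus$ is all zeros (this is the fallback you mention in your last sentence). Your argument instead derives each $\mathcal{A'}_i$ from the corresponding FO tautology $\mathcal{A}_j$, using the identities $\mathscr{I}(a \fab b)=\mathscr{I}(\neg(a\lor b))$, $\mathscr{I}(a \far b)=\mathscr{I}(\neg(a\land b))$, $\mathscr{I}(a \oplus b)=\mathscr{I}(\neg(a \fid b))$ together with compositionality of $\mathscr{I}$ and involutivity of $\neg$; and you correctly track the cross-over whereby $\mathcal{A'}_3$ reduces to $\mathcal{A}_4$ and $\mathcal{A'}_4$ to $\mathcal{A}_3$. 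Your reduction is more informative: it explains \emph{why} the four strings are contradictions (each is of the form $\neg X \oplus \neg Y$ with $X \fid Y$ a known tautology), it anticipates the perpendicularity theorem and Corollary 2.17 that the paper only establishes afterwards, and it generalizes to any FO tautology of the form $X \fid Y$ rather than just these four. What it costs is reliance on the previous proposition and on the (unstated but implicit) recursive compositionality of $\mathscr{I}$ over $\mathscr{A'}$-\textit{wfs}, whereas the paper's tables are entirely self-contained and mechanical. Note also that the NFO/FO interpretation identities you invoke are only formalized syntactically in the paper in Definition 3.1, so if you want the argument to live purely in Section 2 you should state them as truth-table observations, as you in fact do.
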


\begin{proof} With truth tables can be verified
\begin{enumerate}[label=$\mathcal{A'}_{\arabic*}$, topsep=2pt, partopsep=2pt, itemsep=2pt, parsep=2pt]
\item \begin{tabular}{| c |>{\columncolor[rgb]{0.8,0.8,0.8}} c | c |>{\columncolor[rgb]{0.9,0.9,0.9}} c | c |>{\columncolor[rgb]						{0.6,0.6,0.6}} c | c |>{\columncolor[rgb]{0.9,0.9,0.9}} c | c |>{\columncolor[rgb]{0.8,0.8,0.8}} c | c | }
			\hline \rowcolor[rgb]{0.6,0.6,0.6}
			$(p$&$\fab$&$\neg(q$&$\fab$&$r))$&$\oplus$&$(\neg(p$&$\fab$&$q)$&$\fab$&$r)$ \\ \hline
			1 & 0 & 1 & 1 & 1 & 0 & 1 & 1 & 1 & 0 & 1 \\	\hline
			1 & 0 & 1 & 1 & 0 & 0 & 1 & 1 & 1 & 0 & 0 \\	\hline
			1 & 0 & 0 & 1 & 1 & 0 & 1 & 1 & 0 & 0 & 1 \\	\hline
			1 & 0 & 0 & 0 & 0 & 0 & 1 & 1 & 0 & 0 & 0 \\	\hline
			0 & 0 & 1 & 1 & 1 & 0 & 0 & 1 & 1 & 0 & 1 \\	\hline
			0 & 0 & 1 & 1 & 0 & 0 & 0 & 1 & 1 & 0 & 0 \\	\hline
			0 & 0 & 0 & 1 & 1 & 0 & 0 & 0 & 0 & 0 & 1 \\	\hline
			0 & 1 & 0 & 0 & 0 & 0 & 0 & 0 & 0 & 1 & 0 \\	\hline
		\end{tabular}
\item \begin{tabular}{| c |>{\columncolor[rgb]{0.8,0.8,0.8}} c | c |>{\columncolor[rgb]{0.9,0.9,0.9}} c | c |>{\columncolor[rgb]						{0.6,0.6,0.6}} c | c |>{\columncolor[rgb]{0.9,0.9,0.9}} c | c |>{\columncolor[rgb]{0.8,0.8,0.8}} c | c | }
			\hline \rowcolor[rgb]{0.6,0.6,0.6}
			$(p$&$\far$&$\neg(q$&$\far$&$r))$&$\oplus$&$(\neg(p$&$\far$&$q)$&$\far$&$r)$ \\ \hline
			1 & 0 & 1 & 1 & 1 & 0 & 1 & 1 & 1 & 0 & 1 \\	\hline
			1 & 1 & 1 & 0 & 0 & 0 & 1 & 1 & 1 & 1 & 0 \\	\hline
			1 & 1 & 0 & 0 & 1 & 0 & 1 & 0 & 0 & 1 & 1 \\	\hline
			1 & 1 & 0 & 0 & 0 & 0 & 1 & 0 & 0 & 1 & 0 \\	\hline
			0 & 1 & 1 & 1 & 1 & 0 & 0 & 0 & 1 & 1 & 1 \\	\hline
			0 & 1 & 1 & 0 & 0 & 0 & 0 & 0 & 1 & 1 & 0 \\	\hline
			0 & 1 & 0 & 0 & 1 & 0 & 0 & 0 & 0 & 1 & 1 \\	\hline
			0 & 1 & 0 & 0 & 0 & 0 & 0 & 0 & 0 & 1 & 0 \\	\hline
		\end{tabular}
\item \begin{tabular}{| c |>{\columncolor[rgb]{0.8,0.8,0.8}} c | c |>{\columncolor[rgb]{0.9,0.9,0.9}} c | c |>{\columncolor[rgb]						{0.6,0.6,0.6}} c | c |>{\columncolor[rgb]{0.9,0.9,0.9}} c | c |>{\columncolor[rgb]{0.8,0.8,0.8}} c | c | c | c | c | }
			\hline \rowcolor[rgb]{0.6,0.6,0.6}
			$(p$&$\fab$&$\neg(q$&$\far$&$r))$&$\oplus$&$(\neg (p$&$\fab$&$q)$&$\far$&$\neg (p$&$\fab$&$r))$ \\ \hline
			1 & 0 & 1 & 1 & 1 & 0 & 1 & 1 & 1 & 0 & 1 & 1 & 1\\	\hline
			1 & 0 & 1 & 0 & 0 & 0 & 1 & 1 & 1 & 0 & 1 & 1 & 0\\	\hline
			1 & 0 & 0 & 0 & 1 & 0 & 1 & 1 & 0 & 0 & 1 & 1 & 1\\	\hline
			1 & 0 & 0 & 0 & 0 & 0 & 1 & 1 & 0 & 0 & 1 & 1 & 0\\	\hline
			0 & 0 & 1 & 1 & 1 & 0 & 0 & 1 & 1 & 0 & 0 & 1 & 1\\	\hline
			0 & 1 & 1 & 0 & 0 & 0 & 0 & 1 & 1 & 1 & 0 & 0 & 0\\	\hline
			0 & 1 & 0 & 0 & 1 & 0 & 0 & 0 & 0 & 1 & 0 & 1 & 1\\	\hline
			0 & 1 & 0 & 0 & 0 & 0 & 0 & 0 & 0 & 1 & 0 & 0 & 0\\	\hline
		\end{tabular}
\item \begin{tabular}{| c |>{\columncolor[rgb]{0.8,0.8,0.8}} c | c |>{\columncolor[rgb]{0.9,0.9,0.9}} c | c |>{\columncolor[rgb]						{0.6,0.6,0.6}} c | c |>{\columncolor[rgb]{0.9,0.9,0.9}} c | c |>{\columncolor[rgb]{0.8,0.8,0.8}} c | c | c | c | c | }
			\hline \rowcolor[rgb]{0.6,0.6,0.6}
			$(p$&$\far$&$\neg(q$&$\fab$&$r))$&$\oplus$&$(\neg (p$&$\far$&$q)$&$\fab$&$\neg (p$&$\far$&$r))$ \\ \hline
			1 & 0 & 1 & 1 & 1 & 0 & 1 & 1 & 1 & 0 & 1 & 1 & 1\\	\hline
			1 & 0 & 1 & 1 & 0 & 0 & 1 & 1 & 1 & 0 & 1 & 0 & 0\\	\hline
			1 & 0 & 0 & 1 & 1 & 0 & 1 & 0 & 0 & 0 & 1 & 1 & 1\\	\hline
			1 & 1 & 0 & 0 & 0 & 0 & 1 & 0 & 0 & 1 & 1 & 0 & 0\\	\hline
			0 & 1 & 1 & 1 & 1 & 0 & 0 & 0 & 1 & 1 & 0 & 0 & 1\\	\hline
			0 & 1 & 1 & 1 & 0 & 0 & 0 & 0 & 1 & 1 & 0 & 0 & 0\\	\hline
			0 & 1 & 0 & 1 & 1 & 0 & 0 & 0 & 0 & 1 & 0 & 0 & 1\\	\hline
			0 & 1 & 0 & 0 & 0 & 0 & 0 & 0 & 0 & 1 & 0 & 0 & 0\\	\hline
		\end{tabular}
\end{enumerate}
\end{proof}
	
\begin{thm}
$\mathscr{A}_1 \perp \mathscr{A'}_1,\mathscr{A}_2 \perp \mathscr{A'}_2,
\mathscr{A}_3 \perp \mathscr{A'}_3,\mathscr{A}_4 \perp \mathscr{A'}_4$
\end{thm}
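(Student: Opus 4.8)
The plan is to obtain all four perpendicularities at once by combining the two preceding propositions with the Note on the behaviour of $\neg$, since the relation $\perp$ only constrains the final-analysis column of each formula. First I would record the typing: each $\mathscr{A}_i$ is built solely from $\lor,\land,\fid$ applied to the atoms $p,q,r$, hence $\mathscr{A}_i \ \iin \ \Sigma_{FO}$ and is an $\mathscr{A}$-\textit{wfs}; each $\mathscr{A'}_i$ is built solely from $\fab,\far,\oplus,\neg$, hence $\mathscr{A'}_i \ \iin \ \Sigma_{NFO}$ and is an $\mathscr{A'}$-\textit{wfs}. Thus the relation $\mathscr{A}_i \perp \mathscr{A'}_i$ is well posed in the sense of the Definition of Semantic Perpendicularity.

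Next I would invoke the first Proposition: each $\mathscr{A}_i$ is a tautology, so $\mathscr{I}(\mathscr{A}_i)=1$ for every assignment of truth values to $p,q,r$. By the Note, the operator $\neg$ sends the value $1$ to $0$, so $\mathscr{I}(\neg\mathscr{A}_i)=0$ for every such assignment. Then I would invoke the second Proposition: each $\mathscr{A'}_i$ is a contradiction, so $\mathscr{I}(\mathscr{A'}_i)=0$ for every assignment. Comparing the two constant columns, for each $i\in\{1,2,3,4\}$ and for all values of the atoms in the final analysis we have $\mathscr{I}(\mathscr{A'}_i)=0=\mathscr{I}(\neg\mathscr{A}_i)$, which is precisely the defining condition for $\mathscr{A}_i \perp \mathscr{A'}_i$. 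Quantifying over $i$ gives the four assertions of the theorem.

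I do not expect a genuine obstacle here: the only computational content — the eight truth tables — is already discharged in the two propositions, so the theorem reduces to a one-line comparison of the two (constant) final-analysis columns. The single point requiring care is bookkeeping: one must check that the $\mathscr{A}_i$ and $\mathscr{A'}_i$ truth tables list the assignments of $p,q,r$ in the same order, so that the phrase ``for all values of the atoms'' can be read off row by row; inspection of the tables in the two propositions shows the orderings do coincide, so the comparison is immediate and the proof is complete.
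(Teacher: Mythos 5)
Your proposal is correct and follows essentially the same route as the paper: both arguments record the typing of the $\mathscr{A}_i$ and $\mathscr{A'}_i$, combine the tautology/contradiction propositions with the flipping behaviour of $\neg$, and conclude by the definition of $\perp$. The only cosmetic difference is that the paper verifies $\mathscr{I}(\mathscr{A}_i)=\mathscr{I}(\neg\mathscr{A'}_i)=1$ while you verify the literally stated condition $\mathscr{I}(\mathscr{A'}_i)=\mathscr{I}(\neg\mathscr{A}_i)=0$; these are equivalent.
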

\begin{proof}
Clearly $\mathscr{A}_1,\mathscr{A}_2,\mathscr{A}_3,\mathscr{A}_4 \ \iin \ \Sigma_{FO}$ and 
$\mathscr{A'}_1,\mathscr{A'}_2,\mathscr{A'}_3,\mathscr{A'}_4 \ \iin \ \Sigma_{NFO}$ also
$\mathscr{I}(\mathscr{A}_1)=\mathscr{I}(\neg \mathscr{A'}_1)=1$ \\
$\mathscr{I}(\mathscr{A}_2)=\mathscr{I}(\neg \mathscr{A'}_2)=1$ \\
$\mathscr{I}(\mathscr{A}_3)=\mathscr{I}(\neg \mathscr{A'}_3)=1$ \\
$\mathscr{I}(\mathscr{A}_4)=\mathscr{I}(\neg \mathscr{A'}_4)=1$ \\
Then by definition of $\perp$\\
$\mathscr{A}_1 \perp \mathscr{A'}_1,\mathscr{A}_2 \perp \mathscr{A'}_2,
\mathscr{A}_3 \perp \mathscr{A'}_3,\mathscr{A}_4 \perp \mathscr{A'}_4$
\end{proof}
\begin{cor}
$\mathscr{A}_1 \parallel \neg \mathscr{A'}_1,\mathscr{A}_2 \parallel \neg \mathscr{A'}_2,
\mathscr{A}_3 \parallel \neg \mathscr{A'}_3,\mathscr{A}_4 \parallel \neg \mathscr{A'}_4$
\end{cor}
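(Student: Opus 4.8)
The plan is to read the Corollary off the Theorem together with the Note that $\neg$ swaps the interpretation values $1$ and $0$, so no new truth tables are needed. First I would check the only well-formedness point: each $\neg\mathscr{A'}_i$ is again a string of $\Sigma_{NFO}$. Indeed $\mathscr{A'}_i \iin \Sigma_{NFO}$ by the previous Proposition, and $\neg$ is exactly the monary operator included in $\Sigma_{NFO}$; prefixing it to a balanced, structurally well formed string keeps it balanced and structurally well formed, hence $\neg\mathscr{A'}_i \iin \Sigma_{NFO}$, while $\mathscr{A}_i \iin \Sigma_{FO}$ as already noted. So the pair $(\mathscr{A}_i,\ \neg\mathscr{A'}_i)$ is of the admissible shape for the relation $\parallel$.

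Next I would unfold the Theorem. From $\mathscr{A}_i \perp \mathscr{A'}_i$ the definition of Semantic Perpendicularity gives $\mathscr{I}(\mathscr{A'}_i)=\mathscr{I}(\neg\mathscr{A}_i)$ for every assignment of the atoms $p,q,r$ in the final analysis. Applying $\neg$ to both sides and invoking the Note twice (a single $\neg$ toggles $1$ and $0$, so a double $\neg$ leaves the interpretation unchanged, i.e. $\mathscr{I}(\neg\neg\mathscr{A}_i)=\mathscr{I}(\mathscr{A}_i)$) yields $\mathscr{I}(\neg\mathscr{A'}_i)=\mathscr{I}(\mathscr{A}_i)$ throughout the final analysis. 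Equivalently one can argue numerically, which is perhaps cleaner: by the second Proposition $\mathscr{I}(\mathscr{A'}_i)=0$ for all atom values, so by the Note $\mathscr{I}(\neg\mathscr{A'}_i)=1$; by the first Proposition $\mathscr{I}(\mathscr{A}_i)=1$; hence both interpretations equal $1$ on the entire final analysis and therefore coincide.

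Finally, having $\mathscr{A}_i \iin \Sigma_{FO}$, $\neg\mathscr{A'}_i \iin \Sigma_{NFO}$, and $\mathscr{I}(\mathscr{A}_i)=\mathscr{I}(\neg\mathscr{A'}_i)$ for all values of the atoms, the definition of Semantic Parallel applies verbatim and delivers $\mathscr{A}_i \parallel \neg\mathscr{A'}_i$ for each $i=1,2,3,4$, which is the claim. I do not expect a genuine obstacle: the content is entirely a double-negation bookkeeping step. The only place deserving a sentence of justification is the identity $\mathscr{I}(\neg\neg\mathscr{A}_i)=\mathscr{I}(\mathscr{A}_i)$ from the Note (or, in the numerical route, the passage from $\mathscr{I}(\mathscr{A'}_i)=0$ to $\mathscr{I}(\neg\mathscr{A'}_i)=1$), together with the observation that $\neg\mathscr{A'}_i$ still lies in $\Sigma_{NFO}$ so that $\parallel$ is legitimately applicable.
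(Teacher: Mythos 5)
Your proposal is correct and matches the paper's intent: the paper states this corollary without a separate proof because the proof of the preceding Theorem already records the equalities $\mathscr{I}(\mathscr{A}_i)=\mathscr{I}(\neg\mathscr{A'}_i)=1$ for each $i$, which is exactly the condition in the definition of Semantic Parallel. Your double-negation (or numerical) bookkeeping and the check that $\neg\mathscr{A'}_i$ still lies in $\Sigma_{NFO}$ are the same immediate reading-off that the paper intends, just spelled out more explicitly.
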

\begin{cor}
Let $\mathscr{A} \ \iin \ \Sigma_{FO}$ and $\mathscr{A'} \ \iin \ \Sigma_{NFO}$
\begin{enumerate}[label=$\alph*)$, topsep=2pt, partopsep=2pt, itemsep=2pt, parsep=2pt]
\item $\mathscr{A} \perp \mathscr{A'}$ iff $\mathscr{A} \parallel \neg \mathscr{A'}$ 
\item $\mathscr{A} \parallel \neg \mathscr{A'}$ iff $\neg \mathscr{A} \parallel \mathscr{A'}$ 
\end{enumerate}
\end{cor}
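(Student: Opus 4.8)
The plan is to reduce both biconditionals to the raw definitions of $\perp$ and $\parallel$ together with the Note that $\neg$ interchanges the interpretations $1$ and $0$. Before starting I would record two preliminary observations. First, since $\neg$ is a symbol of \emph{both} $\Sigma_{FO}$ and $\Sigma_{NFO}$, if $\mathscr{A}\ \iin\ \Sigma_{FO}$ then $\neg\mathscr{A}\ \iin\ \Sigma_{FO}$, and if $\mathscr{A'}\ \iin\ \Sigma_{NFO}$ then $\neg\mathscr{A'}\ \iin\ \Sigma_{NFO}$; hence all the juxtapositions appearing in the statement are legitimate instances of $\parallel$ and $\perp$. Second, for any \textit{wfs} $\mathscr{B}$ and any assignment of the atoms, $\mathscr{I}(\neg\mathscr{B})=1$ exactly when $\mathscr{I}(\mathscr{B})=0$; applying this twice gives $\mathscr{I}(\neg\neg\mathscr{B})=\mathscr{I}(\mathscr{B})$ in every row of the final analysis, and more generally, for two \textit{wfs} $\mathscr{B},\mathscr{C}$ evaluated at the same assignment, $\mathscr{I}(\mathscr{B})=\mathscr{I}(\mathscr{C})$ holds iff $\mathscr{I}(\neg\mathscr{B})=\mathscr{I}(\neg\mathscr{C})$.

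For part a) I would unfold the definitions directly. By the definition of $\perp$, $\mathscr{A}\perp\mathscr{A'}$ holds iff $\mathscr{I}(\mathscr{A'})=\mathscr{I}(\neg\mathscr{A})$ for every valuation of the atoms in the final analysis. By the definition of $\parallel$ applied to the pair $\mathscr{A}\ \iin\ \Sigma_{FO}$ and $\neg\mathscr{A'}\ \iin\ \Sigma_{NFO}$ (legitimate by the first observation), $\mathscr{A}\parallel\neg\mathscr{A'}$ holds iff $\mathscr{I}(\neg\mathscr{A'})=\mathscr{I}(\mathscr{A})$ for every valuation. By the second observation, for a fixed valuation $\mathscr{I}(\mathscr{A'})=\mathscr{I}(\neg\mathscr{A})$ iff $\mathscr{I}(\neg\mathscr{A'})=\mathscr{I}(\neg\neg\mathscr{A})=\mathscr{I}(\mathscr{A})$; so the two defining conditions coincide row by row, which yields the biconditional.

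For part b) the same unfolding applies. $\mathscr{A}\parallel\neg\mathscr{A'}$ holds iff $\mathscr{I}(\neg\mathscr{A'})=\mathscr{I}(\mathscr{A})$ for all valuations, whereas $\neg\mathscr{A}\parallel\mathscr{A'}$ (again legitimate by the first observation, with $\neg\mathscr{A}\ \iin\ \Sigma_{FO}$ and $\mathscr{A'}\ \iin\ \Sigma_{NFO}$) holds iff $\mathscr{I}(\mathscr{A'})=\mathscr{I}(\neg\mathscr{A})$ for all valuations. Both equalities assert that in each row of the final analysis $\mathscr{I}(\mathscr{A})$ and $\mathscr{I}(\mathscr{A'})$ are opposite bits, so they are equivalent; alternatively, one can simply chain part a) with the obvious identity $\mathscr{A}\perp\mathscr{A'}\iff\neg\mathscr{A}\parallel\mathscr{A'}$ obtained from the definition of $\perp$.

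I do not expect a genuine obstacle here: the content is entirely definitional. The only points requiring care are the bookkeeping of which language each negated \textit{wfs} lives in — dispatched by the first observation — and making the involutive behaviour of $\neg$ on interpretations explicit — the second observation, which is just the Note applied twice. Everything else is a direct substitution into the definitions of $\parallel$ and $\perp$.
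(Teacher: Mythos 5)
Your proof is correct, and it fills in exactly the argument the paper leaves implicit: the corollary is stated without proof, being treated as an immediate consequence of the definitions of $\parallel$ and $\perp$ together with the fact that $\neg$ swaps the interpretations $1$ and $0$. Your definitional unfolding (plus the bookkeeping that $\neg$ keeps a \textit{wfs} inside its own language) is the intended route, so there is nothing to compare beyond noting that you made explicit what the paper takes for granted.
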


We proceed to create two semantic rules of reemplacement that guarantee a $\rho$-assoc and $\rho$-dist with NFO.

\begin{defn}
Let $\Upsilon$ the property that encrypts $\neg$ monary operator in \textit{wfs} $\mathscr{A'} \ \iin \ \Sigma_{NFO}$ iff
this precedes a parentheses with two $A'$-\textit{wfs} which can be atoms operated by $\fab$ or $\far$ NFOs, but does not change the interpretation.
\begin{itemize}
\item $\Upsilon((p \fab \neg (q \fab r)) \oplus (\neg (p \fab q) \fab r))$ 
		is $(p \fab (q \fab r)) \oplus ((p \fab q) \fab r)$
\item $\Upsilon((p \far \neg (q \far r)) \oplus (\neg (p \far q) \far r))$
		is $(p \far (q \far r)) \oplus ((p \far q) \far r))$
\item $\Upsilon((p \fab \neg (q \far r)) \oplus (\neg (p \fab q) \far \neg (p \fab r))$ is \\
		$((p \fab (q \far r)) \oplus ((p \fab q) \far (p \fab r))$
\item $\Upsilon((p \far \neg (q \fab r)) \oplus (\neg (p \far q) \fab \neg (p \far r))$ is \\
		$((p \far (q \fab r)) \oplus ((p \far q) \fab (p \far r))$
\end{itemize}
\end{defn}

\begin{lem}
If $\mathscr{A} \perp \mathscr{A'}$ we can construct a rule such that changing $\mathscr{I}(\mathscr{A'})$ obtain
$\mathscr{A} \parallel \mathscr{B'}$.
\end{lem}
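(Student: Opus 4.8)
\emph{Approach.} The statement asks for a construction, so the plan is to name the replacement rule explicitly and then check the single defining condition of $\parallel$. Since $\mathscr{A}\perp\mathscr{A'}$ means, by the definition of Semantic Perpendicularity, that $\mathscr{I}(\mathscr{A'})=\mathscr{I}(\neg\mathscr{A})$ in the final analysis for every assignment of the atoms (with $\mathscr{A}\ \iin\ \Sigma_{FO}$ and $\mathscr{A'}\ \iin\ \Sigma_{NFO}$), the rule I would take is the one that prepends the monary operator $\neg$, i.e. I would set $\mathscr{B'}:=\neg\mathscr{A'}$. First I would note that $\mathscr{B'}$ is again an $\mathscr{A'}$-\textit{wfs} of $\Sigma_{NFO}$: the language $\Sigma_{NFO}$ contains $\neg$ as a monary operator, and prepending it to a balanced, structurally well formed string with interpretation again yields such a string, so the output of the rule stays in the required language. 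By the Note, applying this rule genuinely ``changes $\mathscr{I}(\mathscr{A'})$'', because $\neg$ turns the interpretation $1$ into $0$ and $0$ into $1$.

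\emph{Verification of $\mathscr{A}\parallel\mathscr{B'}$.} Next I would fix arbitrary values of the atoms and compute $\mathscr{I}(\mathscr{B'})=\mathscr{I}(\neg\mathscr{A'})$, which by the Note is the flip of $\mathscr{I}(\mathscr{A'})$. By hypothesis $\mathscr{I}(\mathscr{A'})=\mathscr{I}(\neg\mathscr{A})$, and the flip of $\mathscr{I}(\neg\mathscr{A})$ is $\mathscr{I}(\mathscr{A})$, since two successive flips on $\{0,1\}$ act as the identity. Hence $\mathscr{I}(\mathscr{B'})=\mathscr{I}(\mathscr{A})$ in the final analysis for all values of the atoms, which is exactly $\mathscr{A}\parallel\mathscr{B'}$ by the definition of Semantic Parallel. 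Read the other way, this is nothing but the equivalence ``$\mathscr{A}\perp\mathscr{A'}$ iff $\mathscr{A}\parallel\neg\mathscr{A'}$'' of the Corollary above, now packaged as a replacement rule $\mathscr{A'}\mapsto\mathscr{B'}$.

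\emph{A cleaner normal form, and the main obstacle.} If one additionally wants the string produced by the rule to carry no $\neg$ standing in front of a parenthesized pair of NFO-subformulas (as happens in the $\mathscr{A'}_i$), I would post-compose with the encryption $\Upsilon$ and take instead $\mathscr{B'}:=\neg\,\Upsilon(\mathscr{A'})$; by the definition of $\Upsilon$ this does not alter any interpretation, so the computation above is untouched and $\mathscr{A}\parallel\mathscr{B'}$ still holds, while $\mathscr{B'}$ now displays the NFO pseudo-associativity/distributivity pattern directly. The only step that really needs attention --- the ``hard part'', modest as it is here --- is checking that the rule always outputs a legitimate object of $\Sigma_{NFO}$ and that it is well defined purely at the level of interpretations, so that it genuinely is a \emph{semantic} rule of replacement; once that is granted, the two negation-flips deliver the conclusion at once.
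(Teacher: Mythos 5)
Your proposal is correct and follows essentially the same route as the paper: you take $\mathscr{B'}:=\neg\mathscr{A'}$ and conclude $\mathscr{A}\parallel\mathscr{B'}$, which is exactly the content of the paper's one-line proof via its Corollary ($\mathscr{A}\perp\mathscr{A'}$ iff $\mathscr{A}\parallel\neg\mathscr{A'}$); you merely unfold that corollary into a direct double-negation check instead of citing it. The extra remarks about membership in $\Sigma_{NFO}$ and the optional post-composition with $\Upsilon$ are harmless elaborations beyond what the paper records.
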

\begin{proof}
By \textbf{Corolary 2.12} $\mathscr{A} \perp \mathscr{A'}$ can be $\mathscr{A} \parallel \neg \mathscr{A'}$, now
$\neg \mathscr{A'}$ is $\mathscr{B'}$
\end{proof}

\begin{defn}
If $\Psi$ is the property mentioned in \textbf{Lemma 2.14}, this changes the interpretation of the $\mathscr{A'}$ final analysis and converts $\mathscr{A}$ in $\neg \mathscr{A'}$.
As $\rho$-assoc and $\rho$-dist properties with NFO have the final analysis with the $\oplus$ binary operator $\Psi$ only needs to change the interpretation of $\oplus$ operator, call this replacement operator $\updownarrow$.
\begin{itemize}
\item $\Psi(((p \fab (q \fab r)) \oplus ((p \fab q) \fab r))$ is $((p \fab (q \fab r)) \updownarrow ((p \fab q) \fab r))$ 
\item $\Psi(((p \far (q \far r)) \oplus ((p \far q) \far r))$ is $((p \far (q \far r)) \updownarrow ((p \far q) \far r))$
\item $\Psi(((p \fab (q \far r)) \oplus ((p \fab q) \far (p \fab r)))$ is 
$((p \fab (q \far r)) \updownarrow ((p \fab q) \far (p \fab r)))$
\item $\Psi(((p \far (q \fab r)) \oplus ((p \far q) \fab (p \far r)))$ is
$((p \far (q \fab r)) \updownarrow ((p \far q) \fab (p \far r)))$
\end{itemize}
\end{defn}

\begin{thm}
NFO $\updownarrow$ perform the same operations of FO $\fid$
\end{thm}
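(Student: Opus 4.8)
The plan is to read the truth table of the replacement operator $\updownarrow$ directly off its defining property in Definition 2.15 and to match it, row by row, against the truth table of $\fid$ recorded in Definition 2.3; the two will turn out to coincide.

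First I would pin down what $\updownarrow$ is forced to be. Each $\rho$-assoc / $\rho$-dist \textit{wfs} $\mathscr{A'}_i$ has $\oplus$ as its outermost operator and, by Proposition 2.9, is a contradiction; since $\mathscr{I}(a\oplus b)=0$ on every row precisely when $a$ and $b$ agree on every row, the two sub-\textit{wfs} flanking that $\oplus$ are semantically equal throughout — this equality is exactly the pseudo-associativity / distributivity being asserted. By Lemma 2.14 together with Corollary 2.11, applying $\Psi$ must convert $\mathscr{A'}_i$ into $\neg\mathscr{A'}_i$, which is parallel to the tautology $\mathscr{A}_i$; and Definition 2.15 stipulates that $\Psi$ achieves this by the single act of overwriting the outermost $\oplus$ with $\updownarrow$, i.e. by negating the final-analysis column. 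Hence on the input rows $(1,1),(1,0),(0,1),(0,0)$ the operator $\updownarrow$ must return $1,0,0,1$ — the negations of the values $0,1,1,0$ that $\oplus$ returns there in Definition 2.3.

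Then I would place this column next to the $\fid$ column of Definition 2.3, which reads $1,0,0,1$ on the same input rows, and observe that the two truth tables are identical; therefore $\updownarrow$ and $\fid$ denote one and the same binary operation. As a sanity check, replacing $\oplus$ by $\fid$ in each $\mathscr{A'}_i$ gives a \textit{wfs} with the same truth table as $\neg\mathscr{A'}_i$, hence a tautology, which is parallel to $\mathscr{A}_i$ by Corollary 2.11 — exactly the effect $\Psi$ is designed to have.

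The one point that needs care is that, inside the $\rho$-assoc / $\rho$-dist \textit{wfs} themselves, the operator $\oplus$ is only ever fed the pairs $(1,1)$ and $(0,0)$, since its two operands always agree; so those four \textit{wfs} on their own fix $\updownarrow$ on just two of its four rows. To upgrade this to ``$\updownarrow$ is $\fid$'' as an operation on all inputs, one must lean on the phrasing of Definition 2.15 that $\Psi$ changes the interpretation of the $\oplus$ operator, i.e. negates its whole column, which then matches $\fid$ on every row. I expect this accounting of which rows are genuinely constrained to be the main subtlety; the column comparison itself is immediate.
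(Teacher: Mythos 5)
Your proposal is correct and follows essentially the same route as the paper: the paper's proof simply observes that $\oplus$ is by definition the negation of $\fid$ and that $\updownarrow$ negates the interpretation of $\oplus$, so the two negations cancel — exactly the column comparison you carry out explicitly. Your additional remark that the four $\rho$-assoc/$\rho$-dist \emph{wfs} by themselves only constrain $\updownarrow$ on the rows $(1,1)$ and $(0,0)$, so that one must read Definition 2.15 as negating the entire $\oplus$ column, is a fair point of care that the paper glosses over, but it does not change the argument.
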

\begin{proof}
By \textbf{Definition 1.1} $\oplus$ is the negation of $\fid$ and how $\updownarrow$ change the interpretation of $\oplus$ then
$\updownarrow$ perform the same operations of $\fid$
\end{proof}
Now we can change $\updownarrow$ of NFO for $\fid$ of FO.

\begin{cor} The next $\mathscr{A'}$-\textit{wfs} are tautologies
\begin{enumerate}[label=(\alph*), topsep=2pt, partopsep=2pt, itemsep=2pt, parsep=2pt]
\item $\neg (((p \fab \neg (q \fab r)) \oplus (\neg (p \fab q) \fab r)))$
\item $\neg (((p \far \neg (q \far r)) \oplus (\neg (p \far q) \far r)))$
\item $\neg (((p \fab \neg (q \far r)) \oplus (\neg (p \fab q) \far \neg (p \fab r)))$
\item $\neg (((p \far \neg (q \fab r)) \oplus (\neg (p \far q) \fab \neg (p \far r)))$
\end{enumerate}
\end{cor}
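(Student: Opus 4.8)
The plan is to recognize that the four strings in items (a)--(d) are, up to the insertion of a well-matched outermost pair of parentheses (and, for (c) and (d), the correction of an evident typo in the closing parentheses), exactly the negations $\neg\mathscr{A'}_1,\neg\mathscr{A'}_2,\neg\mathscr{A'}_3,\neg\mathscr{A'}_4$ of the $\rho$-assoc and $\rho$-dist strings built in Proposition 2.9. So the whole content of the corollary reduces to the single observation that the negation of each $\mathscr{A'}_i$ is a tautology.

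First I would cite Proposition 2.9, which has already established that $\mathscr{I}(\mathscr{A'}_i)=0$ for every assignment of the atoms $p,q,r$; in the terminology of the definition of contradiction, each $\mathscr{A'}_i$ is a contradiction. Next, by the Note on the $\neg$ operator — it sends the interpretation $1$ to $0$ and $0$ to $1$ — applying $\neg$ to a string whose final analysis is identically $0$ yields a string whose final analysis is identically $1$; that is, $\mathscr{I}(\neg\mathscr{A'}_i)=1$ for all values. By the definition of tautology this says precisely that $\neg\mathscr{A'}_i$ is a tautology, which is statement (a)--(d). Adding or deleting a well-balanced outermost parenthesis is a structurally well-formed rewriting that leaves the interpretation untouched, so the parenthesised forms actually displayed in (a)--(d) inherit the same conclusion.

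As an alternative that keeps contact with the perpendicularity apparatus of the section, I would instead chain Theorem 2.10 with Corollary 2.11 and Proposition 2.8: from $\mathscr{A}_i\perp\mathscr{A'}_i$ one gets $\mathscr{A}_i\parallel\neg\mathscr{A'}_i$, hence $\mathscr{I}(\neg\mathscr{A'}_i)$ agrees with $\mathscr{I}(\mathscr{A}_i)$ throughout the common final analysis; and Proposition 2.8 gives $\mathscr{I}(\mathscr{A}_i)=1$ everywhere. Composing the two equalities yields $\mathscr{I}(\neg\mathscr{A'}_i)=1$ for all values, i.e. $\neg\mathscr{A'}_i$ is a tautology. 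This is the route I would actually write down, since it presents the corollary as an immediate dividend of the theorems just proved.

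There is no genuine obstacle here; the only thing that needs attention is the bookkeeping — verifying that the parenthesization shown in (a)--(d) really does match $\neg\mathscr{A'}_i$ so that Proposition 2.9 (equivalently Theorem 2.10 together with Corollary 2.11) applies verbatim, and tidying the stray parentheses in (c) and (d). If one prefers a fully self-contained argument, the four eight-row truth tables of Proposition 2.9 can simply be recomputed and their darker columns flipped, which exhibits the constant column $1$ directly.
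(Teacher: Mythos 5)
Your proof is correct and is essentially the argument the paper relies on: the paper states this corollary without an explicit proof, and the only content needed is that each $\mathscr{A'}_i$ was shown in Proposition 2.9 to be a contradiction, so that applying $\neg$ (which swaps the interpretations $0$ and $1$) yields a tautology. Your alternative route through Theorem 2.10, Corollary 2.11 and Proposition 2.8 is an equivalent repackaging of the same fact, and your remark about the mismatched parentheses in items (c) and (d) correctly identifies them as typographical rather than substantive.
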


In this way we have obtained a $\rho$-assoc and $\rho$-dist of $\mathscr{A}$-\textit{wfs} $\iin \ \Sigma$ with NFO in the semantic of the 0-OL with the application of the rules $\Upsilon$ and $\Psi$.

\begin{note} The \textit{wfs} of \textbf{Corollary 2.17} encrypted by $\Upsilon, \Psi$ are.
\begin{enumerate}[label=(\alph*), topsep=2pt, partopsep=2pt, itemsep=2pt, parsep=2pt]
\item $ (p \fab (q \fab r)) \fid ((p \fab q) \fab r)$
\item $ (p \far (q \far r)) \fid ((p \far q) \far r)$
\item $ (p \fab (q \far r)) \fid ((p \fab q) \far (p \fab r)$
\item $ (p \far (q \fab r)) \fid ((p \far q) \fab (p \far r)$
\end{enumerate}
\end{note}

\begin{note} The rules $\Upsilon$ and $\Psi$ are decryptable.
\end{note}

\section{Sintactical comparison}

In \cite{B7} is defined $\mathscr{L}$-Theory of $\mathscr{HA}$ where $\mathscr{A}$-\textit{f} is a formula, $p, q, \dots$ are symbols

\begin{defn}[$\mathscr{A}$-\textit{f} of $\mathscr{L_{HA}}$]  If $p$-\textit{f}, $q$-\textit{f} of $\mathscr{L_{HA}}$, the following are formulas of $\mathscr{L_{HA}}$
\begin{itemize}
\item $p \fab q$ is $\neg(p \lor q)$ 
\item $p \far q$ is $\neg(p \land q)$ 
\item $p \fizq q$ is $\neg(p \fd q)$ 
\item $p \oplus q$ is $\neg(p \fid q)$ 
\end{itemize}
\end{defn}

\begin{prop} $\mathscr{L_{HA}}$ satisfies
\begin{enumerate}[label=(\alph*), topsep=2pt, partopsep=2pt, itemsep=2pt, parsep=2pt]
\item $\vdash_{\mathscr{L}_{HA}}\neg (((p \fab \neg (q \fab r)) \oplus (\neg (p \fab q) \fab r)))$
\item $\vdash_{\mathscr{L}_{HA}}\neg (((p \far \neg (q \far r)) \oplus (\neg (p \far q) \far r)))$
\item $\vdash_{\mathscr{L}_{HA}}\neg (((p \fab \neg (q \far r)) \oplus (\neg (p \fab q) \far \neg (p \fab r)))$
\item $\vdash_{\mathscr{L}_{HA}}\neg (((p \far \neg (q \fab r)) \oplus (\neg (p \far q) \fab \neg (p \far r)))$
\end{enumerate}
\end{prop}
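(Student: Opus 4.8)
The plan is to reduce each of (a)--(d), by unfolding the abbreviations of Definition 3.1 and applying a few standard theorems of $\mathscr{HA}$, to one of the four classical associativity/distributivity biconditionals $\mathcal{A}_1,\dots,\mathcal{A}_4$ of Proposition 2.9, whose provability in $\mathscr{L_{HA}}$ I will take as available (they are tautologies, hence theorems by the completeness of $\mathscr{HA}$ from \cite{B7}; alternatively one exhibits the Hilbert-style derivations directly). First I would strip the outermost layer: since $X \oplus Y$ abbreviates $\neg(X \fid Y)$ by Definition 3.1, each of (a)--(d) has the form $\neg\neg(X \fid Y)$, which is provably equivalent to $X \fid Y$ by the double-negation theorem $\vdash_{\mathscr{L}_{HA}} \neg\neg A \fid A$. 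So it suffices to derive $\vdash_{\mathscr{L}_{HA}} X \fid Y$ for the four relevant pairs $(X,Y)$.

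Next I would unfold the remaining NFO occurrences, again by Definition 3.1: $q \fab r$ abbreviates $\neg(q \lor r)$ and $q \far r$ abbreviates $\neg(q \land r)$, so every inner subformula $\neg(q\,\square\,r)$ becomes a double negation that collapses. Pushing these replacements through the contexts $p\,\square\,(\cdot)$, $(\cdot)\,\square\,r$, etc., using the double-negation theorem together with the congruence (replacement of provable equivalents) meta-theorem of $\mathscr{HA}$, the pair $(X,Y)$ becomes, up to provable equivalence, $(\neg(p\lor(q\lor r)),\,\neg((p\lor q)\lor r))$ for (a); $(\neg(p\land(q\land r)),\,\neg((p\land q)\land r))$ for (b); $(\neg(p\lor(q\land r)),\,\neg((p\lor q)\land(p\lor r)))$ for (c); and $(\neg(p\land(q\lor r)),\,\neg((p\land q)\lor(p\land r)))$ for (d). Finally, since $\vdash_{\mathscr{L}_{HA}} (\neg A \fid \neg B) \fid (A \fid B)$, the formula $X \fid Y$ is in each case provably equivalent to $\mathcal{A}_1$, $\mathcal{A}_2$, $\mathcal{A}_4$, $\mathcal{A}_3$ respectively, and chaining the equivalences yields $\vdash_{\mathscr{L}_{HA}}$ of (a)--(d).

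The genuine work sits entirely at the meta-level: one needs the replacement lemma for $\mathscr{HA}$, namely that $\vdash_{\mathscr{L}_{HA}} A \fid B$ entails $\vdash_{\mathscr{L}_{HA}} C[A] \fid C[B]$ for every context $C$ (proved by induction on the structure of $C$, using the biconditional elimination/introduction rules derivable in $\mathscr{HA}$), and one needs the three auxiliary theorems $\neg\neg A \fid A$, $(\neg A \fid \neg B)\fid(A\fid B)$, and the four classical biconditionals $\mathcal{A}_1,\dots,\mathcal{A}_4$ — each routine but laborious to present as full Hilbert derivations. I expect this bookkeeping to be the main obstacle; it can be avoided wholesale by invoking the completeness theorem for $\mathscr{HA}$, which, combined with Corollary 2.17 (already establishing that (a)--(d) are tautologies), delivers $\vdash_{\mathscr{L}_{HA}}$ of (a)--(d) immediately.
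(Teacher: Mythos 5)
Your proposal is correct, but its main line of argument is genuinely different from the paper's. The paper proves this proposition in one step: Corollary 2.17 already establishes that (a)--(d) are tautologies, and the completeness theorem for $\mathscr{HA}$ then yields $\vdash_{\mathscr{L}_{HA}}$ of each — exactly the shortcut you relegate to your final sentence. Your primary route instead works syntactically inside $\mathscr{L_{HA}}$: unfold the abbreviations of Definition 3.1, collapse the resulting double negations, and use the replacement meta-theorem to reduce each formula to a negated instance of the classical laws $\mathcal{A}_1,\mathcal{A}_2,\mathcal{A}_4,\mathcal{A}_3$ (your pairing of (c) with $\mathcal{A}_4$ and (d) with $\mathcal{A}_3$ is the right one). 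This buys a proof that actually lives in the "sintactical comparison" the section title promises, and it makes explicit that the $\rho$-assoc/$\rho$-dist formulas are, up to provable equivalence, just the FO associativity and distributivity laws; the cost is that you must establish the replacement lemma, $\neg\neg A \fid A$, and provability of $\mathcal{A}_1,\dots,\mathcal{A}_4$, for which you still end up invoking completeness or long Hilbert derivations, so nothing is saved foundationally. One small slip: to pass from $\vdash A \fid B$ to $\vdash \neg A \fid \neg B$ you need the congruence direction $(A \fid B)\fd(\neg A \fid \neg B)$ (an instance of your replacement lemma with context $\neg(\cdot)$), not the cited $(\neg A \fid \neg B)\fid(A\fid B)$; since both are theorems and you claim only provable equivalence, this does not break the argument.
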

\begin{proof}
By \textbf{Corollary 2.17} \textit{a,b,c,d} are tautologies, by completeness theorem \cite{B3} \cite{B5} \cite{B7} \textit{a,b,c,d} are theorems of $\mathscr{L_{HA}}$
\end{proof}

\section{Main result}

The next formulas are the result of the paper.

\begin{enumerate}[label=(\alph*), topsep=2pt, partopsep=2pt, itemsep=2pt, parsep=2pt]
\item $\vdash_{\mathscr{L}_{HA}} (p \fab \neg (q \fab r)) \fid (\neg (p \fab q) \fab r))$
\item $\vdash_{\mathscr{L}_{HA}} (p \far \neg (q \far r)) \fid (\neg (p \far q) \far r))$
\item $\vdash_{\mathscr{L}_{HA}} (p \fab \neg (q \far r)) \fid (\neg (p \fab q) \far \neg (p \fab r))$
\item $\vdash_{\mathscr{L}_{HA}} (p \far \neg (q \fab r)) \fid (\neg (p \far q) \fab \neg (p \far r))$
\end{enumerate}

% Bibliografia.
%-----------------------------------------------------------------

\end{document}